\newtheorem{theorem}{Theorem}[section]
\newtheorem{lemma}[theorem]{Lemma}
\def\F{\mathbb{F}}
\begin{document}

\title{Three-dimensional Hadamard matrices of Paley type}

\author[V.~Kr\v{c}adinac, M.~O.~Pav\v{c}evi\'{c}, and K.~Tabak]{Vedran Kr\v{c}adinac$^1$, Mario Osvin Pav\v{c}evi\'{c}$^2$, and Kristijan Tabak$^3$}

\address{$^1$Faculty of Science, University of Zagreb, Bijeni\v{c}ka cesta~$30$, HR-$10000$ Zagreb, Croatia}

\address{$^2$Faculty of Electrical Engineering and Computing, University of Zagreb,
Unska~$3$, HR-$10000$ Zagreb, Croatia}

\address{$^3$Rochester Institute of Technology, Zagreb Campus,
D.~T.~Gavrana~$15$, HR-$10000$ Zagreb, Croatia}

\email{vedran.krcadinac@math.hr}
\email{mario.pavcevic@fer.hr}
\email{kristijan.tabak@croatia.rit.edu}

\thanks{This work has been supported by the Croatian Science Foundation
under the project $9752$.}

\keywords{Hadamard matrix; projective special linear group}

\subjclass{05B20, 15B34}

\date{September 12, 2023}

\begin{abstract}
We describe a construction of three-dimensional Hada\-mard matrices of
even order $v$ such that $v-1$ is a prime power. The construction covers
infinitely many orders for which the existence was previously open.
\end{abstract}

\maketitle

\section{Introduction}

An $n$-dimensional matrix of order $v$ over the set $S$ is a function
$H:\{1,\ldots,v\}^n\to S$. A $k$-dimensional \emph{layer} of $H$ is a restriction
obtained by fixing $n-k$ coordinates. Two layers are \emph{parallel} if the same
coordinates are fixed and the values of the fixed coordinates agree, except
(possibly) one. An \emph{$n$-dimensional Hadamard matrix} is an matrix over
$\{-1,1\}$ such that all $(n-1)$-dimensional parallel layers are mutually
orthogonal, i.e.
$$\sum_{1\le i_1,\ldots,\widehat{i_j},\ldots,i_n\le v} H(i_1,\ldots,a,\ldots,i_n)
H(i_1,\ldots,b,\ldots,i_n) = v^{n-1} \delta_{ab}$$
holds for all $j\in \{1,\ldots,n\}$ and $a,b\in \{1,\ldots,v\}$.
\emph{Proper $n$-dimensional Hadamard matrices} satisfy the stronger condition
that all $2$-dimensional layers are Hadamard, i.e.\ $\{-1,1\}$ matrices with
orthogonal rows and columns.

Higher-dimensional Hadamard matrices were introduced by Paul J.\ Schlichta \cite{PS71, PS79}.
These matrices have been extensively studied and have applications in signal processing,
error correction coding and cryptography; see~\cite{KH07, YNX10}.
A central question is determining the orders~$v$ for which Hadamard matrices
exist. By the famous Hadamard conjecture, $2$-dimensional matrices exist for
all~$v$ divisible by~$4$. The conjecture remains open with the smallest
order for which no example is known currently being $v=668$. By the following
theorem of Yang~\cite{YY86}, the existence of proper $n$-dimensional Hadamard
matrices is equivalent to the $2$-dimensional case.

\begin{theorem}[Product construction]
If $h=[h_{ij}]$ is a $2$-dimensional Hadamard matrix of order~$v$, then
$$H(i_1,\ldots,i_n)=\prod_{1\le j<k\le n} h_{i_j i_k}$$
is a proper $n$-dimensional Hadamard matrix of order~$v$.
\end{theorem}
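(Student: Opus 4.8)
The plan is to verify directly the two defining conditions of a proper $n$-dimensional Hadamard matrix for the array $H(i_1,\ldots,i_n)=\prod_{1\le j<k\le n}h_{i_ji_k}$. First note that $H$ takes values in $\{-1,1\}$, being a product of entries of the $\pm1$ matrix $h$. Since the defining formula is symmetric under permutations of the coordinates, in each verification one may assume that the specialized coordinates are the last ones. The single observation driving everything is that a product of $\pm1$'s is unchanged when one of its factors is squared: whenever a product $H(\mathbf i)\,H(\mathbf i')$ is formed in which the two index tuples share some coordinates, every factor $h_{i_ji_k}$ depending only on shared coordinates occurs in both terms and cancels.

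For the orthogonality of $(n-1)$-dimensional parallel layers I would fix the coordinate $j=n$ and take $a,b\in\{1,\ldots,v\}$. In the product $H(i_1,\ldots,i_{n-1},a)\,H(i_1,\ldots,i_{n-1},b)$ all factors $h_{i_ji_k}$ with $j,k\le n-1$ cancel, leaving $\prod_{j=1}^{n-1}h_{i_ja}h_{i_jb}$. The sum over the free indices then separates into a product over $j$, each factor being $\sum_{i=1}^v h_{ia}h_{ib}=v\,\delta_{ab}$ by the column orthogonality of $h$; so the total equals $v^{\,n-1}\delta_{ab}$, as required, and the same works for any fixed $j$ by symmetry.

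For properness I would fix the coordinates $i_3,\ldots,i_n$ to constants $c_3,\ldots,c_n$ and examine the $2$-dimensional layer $M_{ab}=H(a,b,c_3,\ldots,c_n)$. Splitting the product according to whether a pair $\{j,k\}$ meets $\{1,2\}$ gives $M_{ab}=C\,r_a\,s_b\,h_{ab}$, where $C=\prod_{3\le j<k\le n}h_{c_jc_k}$ is a fixed sign, $r_a=\prod_{k=3}^n h_{ac_k}$, and $s_b=\prod_{k=3}^n h_{bc_k}$. Hence $M=C\,D_r\,h\,D_s$ for diagonal $\pm1$ matrices $D_r,D_s$, so, using $C^2=1$ and $D_s^2=I$, one gets $MM^{\mathsf T}=D_r\,(hh^{\mathsf T})\,D_r=v I$ and likewise $M^{\mathsf T}M=vI$; thus every $2$-dimensional layer is Hadamard, again by the symmetry of the construction in the choice of the two free coordinates.

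I do not expect a real obstacle here; the only point requiring care is the bookkeeping of which factors $h_{i_ji_k}$ survive the cancellation and which summation index each surviving factor involves. Once that is in place, the separation of the sum into a product and the reduction of an arbitrary $2$-dimensional layer to $h$ up to row and column sign changes are both immediate.
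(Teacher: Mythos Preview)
The paper does not prove this statement; it is quoted as a theorem of Yang and cited to \cite{YY86}. Your argument is the standard one and is correct.

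One small point of care: the formula $\prod_{j<k}h_{i_ji_k}$ is not in general invariant under permutations of the coordinates (transposing $i_1$ and $i_2$ replaces the factor $h_{i_1i_2}$ by $h_{i_2i_1}$, and $h$ need not be symmetric), so your appeal to ``symmetry'' is not literally justified. This is harmless, however: for an arbitrary fixed coordinate position~$m$ the surviving factors in the orthogonality sum are $\prod_{k<m}h_{i_ka}h_{i_kb}\cdot\prod_{l>m}h_{ai_l}h_{bi_l}$, and the sum still separates into a product, now using both column and row orthogonality of~$h$. The same remark applies to the properness check: for any choice of two free positions one still obtains a layer of the form $C\,D_r\,h\,D_s$ with diagonal $\pm1$ matrices $D_r,D_s$, so the conclusion stands.
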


On the other hand, orders of improper higher-dimensional Hada\-mard
matrices are not necessarily divisible by~$4$, but only even. The book \cite{YNX10}
contains many constructions for Hadamard matrices of orders $v\equiv 2 \pmod{4}$
and dimensions $n\ge 4$. A question whether such matrices exist for all
even orders is raised \cite[Question~12, p.~419]{YNX10}. There are
theorems giving higher-dimensional Hadamard matrices from lower-dimensional
ones, for example \cite[Theorem~6.1.5]{YNX10}:

\begin{theorem}\label{dimpp}
If $h:\{1,\ldots,v\}^n\to\{-1,1\}$ is an $n$-dimensional Hadamard matrix, then
$$H(i_1,\ldots,i_n,i_{n+1})=h(i_1,\ldots,i_{n-1},i_n+i_{n+1})$$
is an $(n+1)$-dimensional Hadamard matrix of the same order~$v$.
The sum in the last coordinate is taken modulo~$v$.
\end{theorem}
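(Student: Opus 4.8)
The plan is to verify the defining orthogonality relation for the $(n+1)$-dimensional matrix $H$ directly, reducing each instance of it to the relations already known for $h$. Identifying $\{1,\dots,v\}$ with $\Z_v$, I first record the basic observation that for a fixed value of $i_{n+1}$ the map $i_n\mapsto i_n+i_{n+1}$ is a bijection of $\{1,\dots,v\}$ (and symmetrically with the roles of $i_n$ and $i_{n+1}$ exchanged); this is what allows one to pass from a summation over $i_n$ to a summation over the combined index $m=i_n+i_{n+1}$. The argument then splits into two cases according to which of the $n+1$ coordinates of $H$ is held fixed.

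\textbf{Case 1: a fixed coordinate among the first $n-1$.} Suppose the $j$-th coordinate with $j\le n-1$ is fixed to values $a$ and $b$. In the sum $\sum H(\dots,a,\dots)H(\dots,b,\dots)$ over the remaining $n$ indices I hold $i_{n+1}$ fixed and replace the summation over $i_n$ by summation over $m=i_n+i_{n+1}$. The resulting inner sum over $n-1$ free indices is precisely the orthogonality relation for $h$ with its $j$-th coordinate fixed, hence equals $v^{n-1}\delta_{ab}$; summing the trivial remaining index $i_{n+1}$ over its $v$ values yields $v^{n}\delta_{ab}$, as wanted.

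\textbf{Case 2: the coordinate $i_n$ or $i_{n+1}$ fixed.} Since the formula for $H$ is symmetric in $i_n$ and $i_{n+1}$, it suffices to treat $i_{n+1}$ fixed, to values $a$ and $b$; set $c=b-a\in\Z_v$. For fixed $i_1,\dots,i_{n-1}$ the substitution $p=i_n+a$ turns $\sum_{i_1,\dots,i_n}H(i_1,\dots,i_n,a)H(i_1,\dots,i_n,b)$ into the sum of $h(i_1,\dots,i_{n-1},p)\,h(i_1,\dots,i_{n-1},p+c)$ over $i_1,\dots,i_{n-1},p$. If $a=b$ then $c=0$ and this is the sum of $h^2\equiv 1$ over $\{1,\dots,v\}^{n}$, namely $v^{n}$. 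If $a\neq b$ then $c\neq 0$, so $p\neq p+c$ for every $p$; interchanging the order of summation and applying the orthogonality of $h$ with its last coordinate fixed makes the inner $(n-1)$-fold sum vanish for each $p$, so the total is $0$. Thus $\sum H(\dots,a,\dots)H(\dots,b,\dots)=v^{n}\delta_{ab}$ in every case, which is exactly the assertion that $H$ is an $(n+1)$-dimensional Hadamard matrix of order $v$.

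The computation is essentially bookkeeping, so I do not expect a serious obstacle; the one point that requires a little care is the last case, where one must notice that once $c\neq 0$ the shifted index $p+c$ is never equal to $p$, which is what permits the two‑coordinate orthogonality of $h$ to be invoked term by term before the outer summation over $p$ is performed.
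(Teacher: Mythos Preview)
Your argument is correct. The two cases are handled cleanly: in Case~1 the change of variable $m=i_n+i_{n+1}$ reduces the layer orthogonality of $H$ to that of $h$ in the same coordinate, and in Case~2 the shift $p=i_n+a$ together with $c\neq 0$ lets you invoke the orthogonality of $h$ in its last coordinate for each fixed $p$. The only cosmetic remark is that the symmetry between $i_n$ and $i_{n+1}$ is manifest from the definition of $H$, so reducing Case~2 to $j=n+1$ is fully justified.

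As for comparison: the paper does not supply its own proof of this theorem at all. It is quoted from \cite[Theorem~6.1.5]{YNX10} without argument, so there is nothing in the paper to compare your proof against. Your direct verification via the substitution $m=i_n+i_{n+1}$ (respectively $p=i_n+a$) is the natural proof and is presumably essentially what appears in the cited reference.
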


This construction does not preserve propriety, i.e.\ $H$ needs not be proper
even if~$h$ is proper. Less is known about Hadamard matrices of dimension~$3$.
A construction based on perfect binary arrays \cite[Theorem~3.2.2]{YNX10} gives
examples for orders $v=2\cdot 3^k$, $k\ge 0$. The existence of $3$-dimensional
Hadamard matrices of orders $v\equiv 2 \pmod{4}$ that are not of this form has
been a long-standing open problem \cite[Questions~5 and~6, p.~419]{YNX10}.
We give an affirmative answer for all even orders $v$ such that $v-1$ is a
prime power.

In the next section we present a construction based on squares in finite fields
of odd orders~$q$ similar to Paley's construction of $2$-dimensional Hadamard
matrices~\cite{RP33}. Our three-dimensional matrices are indexed by points of
the projective line $PG(1,q)$ and are invariant under the projective special
linear group $PSL(2,q)$. The construction covers infinitely many orders for
which three-dimensional Hadamard matrices were not known. We conclude the paper
in Section~\ref{sec3} by considering the smallest order not covered by our
construction that is still open.

\section{The construction}\label{sec2}

Shlichta suggested to generalise known algebraic constructions of
Hadamard matrices to higher dimensions \cite[Section VI, Problem~(a)]{PS79}.
A famous early construction using finite fields is due to Paley~\cite{RP33}.
It can be described as follows. Let $q$ be an odd prime power and
$\chi:\F_q^*\to \{1,-1\}$ a function that takes the value $1$ for non-zero
squares in the field~$\F_q$ and $-1$ for non-squares. Note that $\chi$
is a homomorphism from the multiplicative group of $\F_q$ to $\{1,-1\}$.
The rows and columns of the matrix~$h$ are indexed by the projective line
$PG(1,q)=\{\infty\}\cup \F_q$ and the elements are
\begin{equation}\label{paley}
h(x,y)=\left\{\begin{array}{ll}
-1, & \mbox{if } x=y=\infty,\\
1, & \mbox{if } x=y\neq \infty \mbox{ or } x=\infty\neq y \\
 & \mbox{ or } y=\infty\neq x,\\
\chi(y-x), & \mbox{otherwise.}
\end{array}\right.
\end{equation}
If $q\equiv 3 \pmod{4}$, this is a Hadamard matrix of order $q+1$ called
the \emph{Paley type~I} matrix. For $q\equiv 1 \pmod{4}$ there is a similar
construction of \emph{Paley type~II} Hadamard matrices of orders $2(q+1)$.

Hammer and Seberry~\cite[Example~2]{HS81} assume $\chi(0)=-1$ and
define the $n$-dimensional \emph{Paley cube} as
$$H(x_1,\ldots,x_n)=\left\{\begin{array}{ll}
1, & \mbox{if } x_i=\infty \mbox{ for at least one } i,\\
\chi(x_1+\ldots+x_n), & \mbox{otherwise.}
\end{array}\right.$$
For $n=2$ this matrix is equivalent to the Paley type~I matrix, but for
$n\ge 3$ the $(n-1)$-dimensional layers are not orthogonal and it is not
a higher-dimensional Hadamard matrix. Hammer and Seberry call it ``almost
Hadamard'' because the $2$-dimensional layers are either Hadamard matrices
or matrices of all ones.

We propose an alternative definition for $n=3$:
\begin{equation}\label{maindef}
H(x,y,z)=\left\{\begin{array}{ll}
-1, & \mbox{if } x=y=z,\\
1, & \mbox{if } x=y\neq z\\
 & \mbox{ or } x=z\neq y\\
 & \mbox{ or } y=z\neq x,\\
\chi(z-y), & \mbox{if } x=\infty,\\
\chi(x-z), & \mbox{if } y=\infty,\\
\chi(y-x), & \mbox{if } z=\infty,\\
\chi((x-y)(y-z)(z-x)), & \mbox{otherwise.}
\end{array}\right.
\end{equation}
In the last four rows the coordinates are assumed to be all distinct. We first
describe the symmetries of this three-dimensional matrix.

\begin{lemma}\label{lm1}
The matrix~\eqref{maindef} is invariant under cyclic shifts of the coordinates,
i.e.\ $H(x,y,z)=H(y,z,x)=H(z,x,y)$.
\end{lemma}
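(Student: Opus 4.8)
The plan is to check the single identity $H(x,y,z)=H(y,z,x)$ directly from the definition~\eqref{maindef}; the other identity $H(y,z,x)=H(z,x,y)$ is then obtained by applying this to the triple $(y,z,x)$, and the two together give the full cyclic invariance (the cyclic group of order~$3$ is generated by one $3$-cycle). A cyclic shift permutes the three coordinate positions, so it carries each of the defining cases of~\eqref{maindef} to one of those cases; I would therefore simply go through the cases.

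The cases in which two coordinates coincide are immediate. If $x=y=z$, then $(y,z,x)$ has all coordinates equal as well and both values are $-1$. If exactly two of the coordinates coincide, then the three alternatives ``$x=y\neq z$'', ``$x=z\neq y$'', ``$y=z\neq x$'' are collectively invariant under every permutation of the positions, so a cyclic shift again lands in this case and both values are~$1$.

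Next I would treat the case where $x,y,z$ are distinct with exactly one of them equal to~$\infty$; a cyclic shift keeps exactly one coordinate equal to~$\infty$, so it stays in this case. The point is that the three ``$\infty$''-rows of~\eqref{maindef} are arranged cyclically: when the $i$-th coordinate is~$\infty$, the value is $\chi$ of (the coordinate immediately preceding position~$i$) minus (the coordinate immediately following it), with positions read modulo~$3$, namely $\chi(z-y)$, $\chi(x-z)$, $\chi(y-x)$ for $i=1,2,3$. Hence, for example, if $x=\infty$ then $H(x,y,z)=\chi(z-y)$, while in $H(y,z,x)$ the third coordinate is~$\infty$, so $H(y,z,x)=\chi(z-y)$ too; the remaining two subcases are checked the same way. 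Finally, if $x,y,z$ are distinct and none equals~$\infty$, then the same holds for $(y,z,x)$, and $H(x,y,z)=\chi\big((x-y)(y-z)(z-x)\big)=\chi\big((y-z)(z-x)(x-y)\big)=H(y,z,x)$, since the product is literally unchanged by cyclically rotating its factors. This exhausts all cases.

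I do not expect a serious obstacle: the lemma is essentially a verification. The only point requiring care is the orientation of the differences in the ``$\infty$''-case, i.e.\ making sure the three subformulas are matched up correctly under the shift — which is exactly why the three ``$\infty$''-rows of~\eqref{maindef} were written in that particular cyclic order rather than, say, uniformly as $\chi$ of (smaller index)~$-$~(larger index).
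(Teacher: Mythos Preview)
Your proposal is correct and is precisely the verification the paper has in mind; the paper's own proof is the single line ``Follows directly from the definition.'' You have simply spelled out that verification case by case, which is fine and matches the intended approach.
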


\begin{proof}
Follows directly from the definition.
\end{proof}

\begin{lemma}\label{lm2}
The matrix~\eqref{maindef} is invariant under linear fractional transformations
of the coordinates with determinant~$1$, i.e.\ under the action of $PSL(2,q)$ on
the projective line.
\end{lemma}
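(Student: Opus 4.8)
The plan is to reduce the verification to a generating set of $PSL(2,q)$. Acting on $PG(1,q)=\{\infty\}\cup\F_q$, this group is generated by the translations $\tau_t\colon x\mapsto x+t$ ($t\in\F_q$), the dilations $\sigma_s\colon x\mapsto sx$ ($s$ a nonzero square of $\F_q$), and the single inversion $\iota\colon x\mapsto -1/x$, with the conventions $\iota(0)=\infty$, $\iota(\infty)=0$. Indeed, $\tau_t$ and $\sigma_s$ together generate the stabilizer of $\infty$ (the image in $PSL(2,q)$ of the upper-triangular matrices), while a general $g\colon x\mapsto(ax+b)/(cx+d)$ with $ad-bc=1$ and $c\neq 0$ factors, by the partial-fraction identity $(ax+b)/(cx+d)=a/c-1/\bigl(c(cx+d)\bigr)$, as $g=\tau_{a/c}\circ\iota\circ\tau_{cd}\circ\sigma_{c^2}$ (and $c^2$ is a nonzero square). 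So it suffices to show $H\bigl(g(x),g(y),g(z)\bigr)=H(x,y,z)$ for $g$ equal to each of $\tau_t$, $\sigma_s$, $\iota$. Throughout, since every $g\in PSL(2,q)$ permutes $PG(1,q)$, the pattern of coincidences among $x,y,z$ is unchanged, so the first two rows of \eqref{maindef} (all coordinates equal, or exactly two equal) are automatically respected; only the last four rows need attention.

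Invariance under $\tau_t$ is clear: $\tau_t$ fixes $\infty$, and the arguments of $\chi$ in the last four rows of \eqref{maindef} are differences, or products of differences, of the coordinates, hence unaltered by a common translation. Invariance under $\sigma_s$ is nearly as quick: $\sigma_s$ also fixes $\infty$, and a dilation multiplies the argument of $\chi$ by $s$ in each of the three rows with one infinite coordinate and by $s^{3}$ in the generic row; since $s$ is a square, $\chi(s)=1=\chi(s)^{3}$, so these factors disappear.

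The inversion $\iota$ is the one case that requires genuine work, because it moves $\infty$ — in fact it interchanges $0$ and $\infty$. One therefore separates cases according to how many of $x,y,z$ lie in $\{0,\infty\}$ (at most two, if the coordinates are distinct); the cyclic symmetry of Lemma~\ref{lm1} commutes with the coordinatewise action of $\iota$, so it is enough to treat one representative of each case up to cyclic rotation. The heart of the matter is the identity $(-1/x)-(-1/y)=(x-y)/(xy)$: when $x,y,z$ are distinct and none lies in $\{0,\infty\}$, it gives
$$\chi\bigl((\iota x-\iota y)(\iota y-\iota z)(\iota z-\iota x)\bigr)=\chi\!\left(\frac{(x-y)(y-z)(z-x)}{(xyz)^{2}}\right)=\chi\bigl((x-y)(y-z)(z-x)\bigr),$$
since $(xyz)^{2}$ is a square. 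The remaining sub-cases — one coordinate equal to $\infty$ (the other two distinct and nonzero), one equal to $0$, or one equal to $\infty$ and another to $0$ — each amount to matching a ``one coordinate is $\infty$'' row of \eqref{maindef} against the corresponding specialization of an adjacent row; a short direct computation confirms the values agree, the only point to watch being that the factors of $\chi(-1)$ arising from identities such as $\chi(y-x)=\chi(-1)\chi(x-y)$ always occur an even number of times and hence cancel. I expect this finite case check to be the main, though entirely routine, obstacle; everything else follows immediately once the generating set is fixed.
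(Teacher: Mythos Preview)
Your argument is correct, but it is organized differently from the paper's. The paper works directly with a general $f(x)=\frac{ax+b}{cx+d}$, $ad-bc=1$, and appeals to the single identity $f(x)-f(y)=\dfrac{x-y}{(cx+d)(cy+d)}$; multiplying three such differences makes the denominator $(cx+d)^2(cy+d)^2(cz+d)^2$, a square, so $\chi$ is unchanged in the generic case, and the configurations involving $\infty$ are dismissed with a ``similarly''. You instead reduce to the generating set $\{\tau_t,\sigma_s,\iota\}$ and verify each generator in turn. Your route is more elementary---it never needs the general M\"obius difference formula---and it makes the role of the square-determinant hypothesis completely transparent (it is exactly why only square dilations $\sigma_s$ appear). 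The cost is the case analysis for $\iota$, which you correctly identify as the only genuine work; the paper's approach hides the same case analysis behind its ``similarly'', so in substance the two proofs are of comparable length. One small remark: your parenthetical about factors of $\chi(-1)$ occurring ``an even number of times'' is a slightly loose way to phrase what actually happens---in each sub-case the signs simply match after one rewrites differences consistently---but the underlying computations (e.g.\ $H(0,-1/y,-1/z)=\chi\bigl(-(y-z)/(yz)^2\bigr)=\chi(z-y)=H(\infty,y,z)$) do all go through as you assert.
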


\begin{proof}
Let $f:PG(1,q)\to PG(1,q)$ be defined by $f(x)=\frac{ax+b}{cx+d}$ for $a,b,c,d\in \F_q$,
$ad-bc=1$. If the denominator is zero then $f(x)=\infty$, and $f(\infty)=\frac{a}{c}$.
We claim that $H(f(x),f(y),f(z))=H(x,y,z)$ for all $x,y,z\in PG(1,q)$. Since $f$ is a
bijection, this clearly holds if $x=y=z$ or two of the coordinates are equal. Assume
that the three coordinates are all distinct and $\infty$ does not appear among $x$, $y$, $z$,
$f(x)$, $f(y)$, $f(z)$. Because $ad-bc=1$, we have $f(x)-f(y)=\frac{x-y}{(cx+d)(cy+d)}$,
so $(f(x)-f(y))(f(y)-f(z))(f(z)-f(x))=\frac{(x-y)(y-z)(z-x)}{(cx+d)^2(cy-d)^2(cz+d)^2}$.
The denominator is a square, hence the $\chi$-value of this expression agrees with
$\chi((x-y)(y-z)(z-x))$. Similarly one can check that $H(f(x),f(y),f(z))=H(x,y,z)$
if $x$, $y$, $z$ are distinct and $\infty$ does appear among $x$, $y$, $z$,
$f(x)$, $f(y)$, $f(z)$.
\end{proof}

\begin{theorem}\label{maintm}
For every odd prime power~$q$, equation~\eqref{maindef} defines a three-dimensional Hadamard
matrix of order~$q+1$. If $q\equiv 3 \pmod{4}$, the matrix is proper with all $2$-dimensional
layers equivalent to the Paley type I matrix~\eqref{paley}.
\end{theorem}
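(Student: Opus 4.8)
The plan is to verify the defining orthogonality relation
$\sum_{y,z} H(x_1,y,z)H(x_2,y,z) = (q+1)^2 \delta_{x_1 x_2}$
for all $x_1, x_2 \in PG(1,q)$; by Lemma~\ref{lm1} the same relation in the other two coordinate directions follows automatically, so this single family of sums is all that needs checking. When $x_1 = x_2$ the sum is $\sum_{y,z} H(x_1,y,z)^2 = (q+1)^2$ since every entry is $\pm 1$, so the real content is the case $x_1 \neq x_2$, where we must show the sum vanishes. Here I would exploit Lemma~\ref{lm2}: $PSL(2,q)$ acts transitively on ordered pairs of distinct points of $PG(1,q)$ up to the stabilizer structure — more precisely, it is transitive on unordered pairs, and in fact $2$-transitive, so after applying a suitable $f \in PSL(2,q)$ we may reduce to a single convenient choice of $\{x_1, x_2\}$, say $\{0, \infty\}$ (one checks $PSL(2,q)$ is $2$-transitive on the $q+1$ points, so we can send any ordered pair of distinct points to $(\infty, 0)$). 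Thus it suffices to prove $\sum_{y,z} H(\infty, y, z) H(0, y, z) = 0$.

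With $x_1 = \infty$ and $x_2 = 0$ fixed, I would split the double sum over $(y,z) \in PG(1,q)^2$ according to the degenerate cases built into \eqref{maindef}: the diagonal $y = z$; the lines $y = \infty$, $z = \infty$, $y = 0$, $z = 0$; and the generic locus where $y, z, \infty, 0$ are pairwise distinct and finite. On the generic locus, $H(\infty,y,z) = \chi(z-y)$ and $H(0,y,z) = \chi((0-y)(y-z)(z-0)) = \chi(-yz(y-z)) = \chi(-1)\chi(yz)\chi(z-y)$, so the product is $\chi(-1)\chi(yz)$, independent of the cross-difference. The generic contribution is therefore $\chi(-1) \sum_{y,z} \chi(y)\chi(z)$ over $y,z \in \F_q \setminus \{0\}$ with $y \neq z$, which I would evaluate using the standard character-sum facts $\sum_{t \in \F_q^*}\chi(t) = 0$ and $\sum_{t \in \F_q^*}\chi(t)\chi(t+c) = -1$ for $c \neq 0$; this makes the generic part equal to a small explicit constant. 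The handful of low-dimensional boundary terms (at most $O(q)$ of them, each a single $\chi$-value or $\pm1$) I would tabulate directly, again collapsing the sums over a free coordinate via $\sum \chi = 0$, and check that everything cancels.

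For the second assertion, assume $q \equiv 3 \pmod 4$, so $\chi(-1) = -1$. A $2$-dimensional layer is obtained by fixing one coordinate, say $x = c$, giving the matrix $h_c(y,z) = H(c,y,z)$ indexed by $PG(1,q)^2$. I would show $h_\infty$ is exactly the Paley type~I matrix \eqref{paley}: indeed $H(\infty,\infty,\infty) = -1$, $H(\infty, y, y) = 1$ for $y \neq \infty$, $H(\infty, \infty, z) = H(\infty, y, \infty) = 1$ (using the "$x = y \neq z$" etc.\ clauses, valid since $\infty$ occurs twice), and $H(\infty, y, z) = \chi(z - y)$ otherwise — which is precisely \eqref{paley} with $(x,y) \leftrightarrow (y,z)$. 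Since $q \equiv 3 \pmod 4$ this layer is Hadamard. For a general finite $c$, the transformation $f(x) = x - c$ (sitting inside $PSL(2,q)$ after the usual determinant-$1$ normalization $x \mapsto \frac{x-c}{1}$, which has determinant $1$) carries $h_c$ to $h_0$ by Lemma~\ref{lm2}, and a further inversion $x \mapsto -1/x$ carries $h_0$ to $h_\infty$; since these are equivalence operations (simultaneous permutations of row and column indices), every layer is equivalent to the Paley type~I matrix, hence Hadamard, so $H$ is proper.

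The main obstacle I anticipate is the bookkeeping in the $x_1 \neq x_2$ orthogonality computation: even after the $PSL(2,q)$ reduction to $(\infty, 0)$, one must carefully enumerate the overlapping degenerate strata (e.g.\ $y = z = 0$, $y = 0 \neq z$, $y = z \neq 0,\infty$, and so on) so that each $(y,z)$ is counted exactly once, and then confirm that the boundary contributions precisely cancel the generic character-sum constant $\chi(-1)(\text{something})$. Getting the inclusion-exclusion over these strata right — and handling the sign $\chi(-1)$ consistently — is where an error would most likely creep in; the character-sum evaluations themselves are standard.
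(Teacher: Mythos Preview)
Your approach is essentially identical to the paper's: both use the cyclic symmetry (Lemma~\ref{lm1}) and the $2$-transitivity of $PSL(2,q)$ (Lemma~\ref{lm2}) to reduce the orthogonality check to the single pair $\{0,\infty\}$, then split the double sum into diagonal, boundary, and generic strata, evaluating the generic part via $\sum_{y\neq z}\chi(y)\chi(z)=1-q$ and reading off the $z=\infty$ layer as the Paley type~I matrix for the properness claim. One small slip to flag: your generic product $H(\infty,y,z)H(0,y,z)$ actually simplifies to $\chi(yz)$, not $\chi(-1)\chi(yz)$ (the two factors of $\chi(-1)$ coming from $\chi(-yz(y-z))$ and from $\chi(y-z)\chi(z-y)=\chi(-1)$ cancel), which matches the paper's computation and is exactly the sign-tracking hazard you anticipated.
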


\begin{proof}
Because of Lemma~\ref{lm1} we may fix the last coordinate. We claim that
for all distinct $a,b\in PG(1,q)$,
$$\sum_{x,y\in PG(1,q)} H(x,y,a) H(x,y,b)=0.$$
Now, because $PSL(2,q)$ acts $2$-transitively on $PG(1,q)$ and Lemma~\ref{lm2},
we may take $a=0$ and $b=\infty$ without loss of generality. We divide the sum
into two parts:
$$\sum_{x} H(x,x,0)H(x,x,\infty) + \sum_{x\neq y} H(x,y,0)H(x,y,\infty).$$
The first part is readily seen to be $q-3$. For the second part we distinguish whether
$0$, $\infty$ are among $x$, $y$ or not. If $x=0$, $y=\infty$ or $x=\infty$, $y=0$
the summand is $1$, so up to now the total is $q-1$. If $x\in \F_q^*$, $y=\infty$, the
sum is $$\sum_{x\in \F_q^*} H(x,\infty,0)H(x,\infty,\infty)=\sum_{x\in \F_q^*} \chi(x)=0$$
because there are equally many squares and non-squares in $\F_q^*$. Similarly we see that
the cases $x\in \F_q^*$, $y=0$; $x=\infty$, $y \in \F_q^*$; and $x=0$, $y \in \F_q^*$ sum
up to $0$. The final part of the sum is over $x,y\in \F_q^*$, $x\neq y$:
\begin{multline*}
\sum H(x,y,0)H(x,y,\infty)=\sum \chi((x-y)(y-0)(0-x))\chi(y-x)= \\[2mm]
= \sum \chi(x)\chi(y) =\sum_x \chi(x)\sum_{y\neq x}\chi(y) = \sum_x \chi(x)(-\chi(x))=1-q.
\end{multline*}
The grand total is $q-1+1-q=0$ and the first part of the theorem is proved. For the second
part notice that thanks to Lemmas~\ref{lm1} and~\ref{lm2} we may, without loss of generality,
fix the third coordinate to $z=\infty$ and look at the $2$-dimensional layer obtained by
varying $x$ and $y$. In this case equation~\eqref{maindef} reduces to~\eqref{paley}, and
this is a Paley type~I Hadamard matrix if $q\equiv 3 \pmod{4}$.
\end{proof}

\section{Concluding remarks}\label{sec3}

Theorem~\ref{maintm} proves the existence of three-dimensional Hadamard matrices
of orders $v=10$, $14$, $26$, $30$, $38$, $42$, and infinitely many other orders
that were previously unknown. The construction is implemented in our GAP~\cite{GAP4}
package \emph{Prescribed Automorphism Groups}~\cite{PAG}. Examples can be easily
obtained by typing \texttt{Paley3DMat}$(v)$.

The smallest order $v\equiv 2 \pmod{4}$ not covered by Theorem~\ref{maintm} is $v=22$.
A four-dimensional Hadamard matrix of order~$22$ can be constructed by
\cite[Theorem~6.1.4]{YNX10} from a $2$-dimensional Hadamard matrix of order $22^2=484$.
Theorem~\ref{dimpp} then covers all dimensions $n>4$. As far as we know, the existence
of a three-dimensional Hadamard matrix of order~$22$ is an open problem. We tried
constructing examples by prescribing automorphism groups (see~\cite{dLS08}
for the relevant definitions) but we did not succeed.


\begin{thebibliography}{20}

\bibitem{dLS08}
W.~de Launey, R.~M.~Stafford, \emph{Automorphisms of higher-dimensional Hadamard matrices},
J.\ Combin.\ Des.\ \textbf{16} (2008), no.\ 6, 507--544.

\bibitem{GAP4}
The GAP~Group, \emph{GAP -- Groups, Algorithms, and Programming},
Version 4.12.2, 2022. \url{https://www.gap-system.org}

\bibitem{HS81}
J.~Hammer, J.~R.~Seberry, \emph{Higher-dimensional orthogonal designs and applications},
IEEE Trans.\ Inform.\ Theory \textbf{27} (1981), no.\ 6, 772--779.

\bibitem{KH07}
K.~J.~Horadam, \emph{Hadamard matrices and their applications}, Princeton University Press,
Princeton, NJ, 2007.

\bibitem{PAG}
V.~Krčadinac, \emph{PAG -- Prescribed Automorphism Groups}, Version 0.2.2, 2023 (GAP package).
\url{https://vkrcadinac.github.io/PAG/}

\bibitem{RP33}
R.~E.~A.~C.~Paley, \emph{On orthogonal matrices}, Journal of Mathematics and
Physics \textbf{12} (1933), 311–320.

\bibitem{PS71}
P.~J.~Shlichta, \emph{Three- and four-dimensional Hadamard matrices},
Bull.\ Amer.\ Phys.\ Soc.\ \textbf{16 (8)} (1971), 825--826.

\bibitem{PS79}
P.~J.~Shlichta, \emph{Higher dimensional Hadamard matrices},
IEEE Trans.\ Inform.\ Theory \textbf{25} (1979), no.\ 5, 566--572.

\bibitem{YY86}
Y.~X.~Yang, \emph{Proofs of some conjectures about higher-dimensional Hadamard matrices} (Chinese),
Kexue Tongbao (Chinese) \textbf{31} (1986), no.\ 2, 85--88.

\bibitem{YNX10}
Y.~X.~Yang, X.~X.~Niu, C.~Q.~Xu, \emph{Theory and applications of higher-dimensional Hadamard
matrices, Second edition}, Chapman and Hall/CRC Press, 2010.

\end{thebibliography}
\end{document}